\documentclass[12pt,oneside,reqno]{amsart}%
\makeatletter
\usepackage{amsfonts}
\usepackage{amsmath}
\usepackage{amssymb}
\usepackage{graphicx}
\usepackage{booktabs}
\usepackage{subcaption}
\usepackage{epstopdf} % with/without is the same
\usepackage{version}
\usepackage{bm}
\usepackage{enumerate}
\usepackage{epsfig}
\usepackage{caption}
\usepackage{xcolor}
\usepackage{setspace}
\usepackage{amsthm}
\usepackage{caption}
\DeclareCaptionLabelFormat{cont}{#1~#2\alph{ContinuedFloat}}
\usepackage{footnote}
\theoremstyle{plain}
\newtheorem{theorem}{Theorem}[section]
\newtheorem{corollary}[theorem]{Corollary}

\newtheorem{proposition}[theorem]{Proposition}
\newtheorem{definition}[theorem]{Definition}

\newtheorem{assumption}[theorem]{Assumption}
\newtheorem{remark}[theorem]{Remark}

\newtheorem{observation}[theorem]{Observation}
\newtheorem*{notation*}{Notation}
\numberwithin{equation}{section}

\excludeversion{comment1}

\begin{document}
\title{Understanding and Resolving Singularities in 3D Dirichlet Boundary Problems}
\author{  David Levin  }

\footnotetext[1]{David Levin, School of Mathematical Sciences, Tel-Aviv University, Israel}

\begin{abstract}

We introduce a two-phase approximation method designed to resolve singularities in three-dimensional harmonic Dirichlet problems.  The approach utilizes the classical Green's function representation, decomposing the function into its singular and regular components.

The singular phase employs Green’s formula with the singular part, for which we show that it induces the necessary singularities in the solution. The regular phase then introduces a smooth correction to recover the remaining regular part of the solution.
The construction employs high-order quadrature rules in the first phase, followed by collocation with a suitable harmonic basis in the second.

\end{abstract}
\maketitle

\section{Introduction}

\section*{Problem Setup}

Let \(\Omega \subset \mathbb{R}^3\) be a bounded domain with boundary \(\partial \Omega\), and consider the Dirichlet problem:
\[
\begin{cases}
\Delta u = 0, & \text{in } \Omega, \\
u = f, & \text{on } \partial \Omega.
\end{cases}
\]
The boundary $\partial\Omega$ may be smooth or non-smooth, and 
the boundary data \(f\) may be continuous on \(\partial \Omega\), piecewise continuous, or in \(L^2(\partial \Omega)\).

A weak solution \(u \in H^1(\Omega)\) exists under mild assumptions on the boundary data (e.g., \(f \in L^2(\partial \Omega)\)), and the solution is harmonic; hence, it is infinitely differentiable in the interior of \(\Omega\).
However, the solution may exhibit singular behavior near geometric singularities of the domain, such as the corners of $\partial\Omega$, as well as near discontinuities in the boundary data $f$. In particular, the gradient \(\nabla u\) may become unbounded in the vicinity of these singularities.

One of the most widely used practical methods for solving elliptic boundary value problems is the Finite Element Method (FEM); see Babuška \cite{Babuska}. However, in problems governed by the Laplace and Poisson equations, the spatial derivatives of the potential field may become unbounded near corners and edges of the domain. As a result, conventional finite elements often fail to capture the solution accurately in the vicinity of such singularities. This limitation has motivated significant research into the development and implementation of specialized singular elements for two-dimensional finite element analysis, specifically designed to resolve corner and edge singularities in potential problems: see, for example, \cite{Cost, Gris, Kozlov, Mukh, Ong}. However, in contrast to the two-dimensional case, the explicit characterization of singularities in solutions to the three-dimensional harmonic problem near corners and edges remains incomplete.

In the two--dimensional setting, a corrected collocation approach
for treating corner singularities was proposed in \cite{Levin1980}.
The method is tailored to geometric configurations
that admit a singular--regular decomposition of the corresponding Green's function.
The extension of this approach to higher dimensions has,
to the best of our knowledge, remained unresolved.
The purpose of the present paper is to address this problem.

The results presented here extend the methodology of \cite{Levin1980}
to three dimensions.
While the applicability of the method is still confined to certain
geometric structures, it already covers important examples,
including cubes and cylinders.
Furthermore, the analysis developed in this work
provides a framework for understanding the singular behavior
of solutions in more general geometries.

The paper is organized as follows. 
In Section~2 we recall Green’s identity and review explicit 
representations of the Dirichlet Green’s function in geometries 
of interest, including the half–space, the unit cube, 
and cylindrical domains. These examples motivate the 
singular–regular decomposition employed throughout the paper. 
In Section~3 we formulate the S–R method in a general setting, 
establish the decomposition of the solution into singular 
and harmonic components, and describe the resulting two–phase 
approximation procedure. Particular attention is given to the 
implementation for the unit cube, including the computation 
of the singular integral contribution, the construction of 
harmonic approximants via both polynomial interpolation and 
the method of fundamental solutions, and the treatment of 
nearly singular boundary integrals. 
Section~4 presents numerical experiments that assess stability, 
conditioning, and accuracy of the proposed method, with 
emphasis on corner–induced singularities and high–order 
evaluation of the solution near the boundary.

\section{Preliminaries - Green's identity and Green's function}

Let $\Omega \subset \mathbb{R}^3$ be a bounded domain with sufficiently smooth boundary $\partial\Omega$. 
We consider the harmonic Dirichlet problem
\begin{equation}
\begin{cases}
-\Delta u(x) = 0, & x \in \Omega, \\
u(x) = f(x), & x \in \partial\Omega .
\end{cases}
\end{equation}
Given boundary data $f$ on $\partial\Omega$, the objective is to determine a function 
$u : \overline{\Omega} \to \mathbb{R}$ that is harmonic in $\Omega$ and satisfies the prescribed boundary values. 
Under standard assumptions on $f$ (for example, $f \in L^2(\partial\Omega)$), the problem admits a weak solution 
$u \in H^1(\Omega)$, which is smooth in the interior of $\Omega$.

In 1828, George Green presented his second integral identity:

\[
\int_{\Omega} \big( u \,\Delta v - v \,\Delta u \big)\, dV
  = \int_{\partial\Omega} 
      \left( 
          u \,\frac{\partial v}{\partial n}
          - v \,\frac{\partial u}{\partial n}
      \right) dS .
\]
Building on this identity, Green introduced what is now known as Green's function, providing an explicit integral representation for solutions of Laplace’s equation.

Let $\Omega \subset \mathbb{R}^3$ be a closed domain with a piecewise smooth boundary 
$\partial\Omega$.  
The Green's function $G(x,y)$ for the Laplace operator in $\Omega$ is defined as 
the function satisfying
\begin{equation}
\label{eq:Green-def}
\begin{cases}
\Delta_y G(x,y) = -\delta(x-y), & y \in \Omega, \\[4pt]
G(x,y) = 0, & y \in \partial\Omega .
\end{cases}
\end{equation}

Here $\delta$ denotes the Dirac delta distribution, and $\Delta_y$ is the Laplacian 
with respect to the variable $y$.

Given boundary data $u = f$ on $\partial\Omega$, the harmonic Dirichlet problem
\begin{equation}
\Delta u = 0 \quad \text{in } \Omega, 
\qquad u = f \quad \text{on } \partial\Omega
\end{equation}
has the solution representation \cite{Green1828}
\begin{equation}
\label{eq:GR}
u(x) 
= \int_{\partial\Omega}
    \frac{\partial G}{\partial n_y}(x,y)f(y)\, ds_y,
\qquad x \in \Omega,
\end{equation}
where $\dfrac{\partial G}{\partial n_y}$ denotes differentiation with respect 
to the outward unit normal at $y \in \partial\Omega$.

{\it The Green's function method reduces solving the PDE to computing integrals over the domain boundary. It naturally incorporates the boundary data and provides an elegant integral representation for the solution.}

The fundamental solution of the Laplace operator in $\mathbb{R}^3$ is
\begin{equation}\label{Phi}
G(x,y)=\frac{1}{4\pi |x-y|} \equiv \Phi(|x-y|), \qquad x\neq y .
\end{equation}
The point $y$ is called the \emph{source point}, and the function $\Phi(|x-y|)$ is referred to as the \emph{source function}.

\subsection{Green's Function for the Half-Space $x_3\ge 0$}\label{HalfSpace}\hfill

Let $x = (x_1, x_2, x_3) \in \Omega$ and $y = (y_1, y_2, y_3) \in \Omega$ be a source point. The Green's function for Laplace's equation in the upper half-space with Dirichlet boundary condition is:
\[
G(x, y) = \frac{1}{4\pi} \left( \frac{1}{|x - y|} - \frac{1}{|x - y^*|} \right),
\]
where $y^* = (y_1, y_2, -y_3)$ is the reflection of $y$ across the plane $x_3 = 0$. This ensures:
\begin{itemize}
    \item $G(x, y) = 0$ for $x \in \partial \Omega$,
    \item $\Delta_x G(x, y) = -\delta(x - y)$ in $\Omega$.
\end{itemize}

Since $\Delta u = 0$, the representation becomes:
\[
u(y) = -\int_{\partial \Omega} \frac{\partial G}{\partial n_x}(x, y)\, g(x)\, dS(x),
\]
where $n_x$ is the outward normal at $x \in \partial \Omega$. Since $\partial \Omega$ is the plane $x_3 = 0$, the outward normal is $n_x = (0, 0, -1)$, so:
\[
\frac{\partial G}{\partial n_x} = -\frac{\partial G}{\partial x_3}.
\]

\subsection*{Normal Derivative of Green's Function}

Differentiating $G(x, y)$ with respect to $x_3$ at $x_3 = 0$, we obtain:
\begin{equation}\label{dGdn}
-\frac{\partial G}{\partial x_3}(x, y)\Big|_{x_3=0} = \frac{y_3}{2\pi \left( |x' - y'|^2 + y_3^2 \right)^{3/2}},
\end{equation}
where $x' = (x_1, x_2)$ and $y' = (y_1, y_2)$. This is known as the \textbf{Poisson kernel} for the 3D half-space:
\[
P(x', y) = \frac{y_3}{\pi \left( |x' - y'|^2 + y_3^2 \right)^{3/2}}.
\]

Substituting into the representation formula:
\[
u(y) = \int_{\mathbb{R}^2} P(x', y)\, g(x')\, dx'.
\]

As $y = (y', y_3) \to (x_0', 0) \in \partial \Omega$, the Poisson kernel $P(x', y)$ becomes sharply peaked around $x' = x_0'$, and tends to a delta function:
\[
\lim_{y_3 \to 0^+} u(y', y_3) = g(y').
\]

Therefore, the function $u(y)$ defined by Green's representation (or equivalently, the Poisson integral) satisfies the Dirichlet boundary condition:
\[
\lim_{y \to x_0 \in \partial \Omega} u(y) = g(x_0).
\]
This confirms that the Green's formula solution matches the prescribed boundary values on $\partial \Omega$.

\iffalse
\subsection{Unit Ball in $\mathbb{R}^3$}\hfill

For the unit ball $\Omega = \{ x \in \mathbb{R}^3 : \|x\| \le 1 \}$, the Green's function is:

\[
G(x, y) = \frac{1}{4\pi |x - y|} - \frac{1}{4\pi |y|\, |x - y^*|}
\]

where $y^* = \frac{y}{|y|^2}$ is the Kelvin inversion of $y$. This ensures $G = 0$ on $\partial \Omega$.
\fi

\subsection{The unit cube}\hfill

For $\Omega=[0,1]^3$,
the Dirichlet Green’s function can be expressed using the method of images as a tri-infinite lattice sum of free-space sources placed at the reflected image points.

\begin{equation}\label{Gcube}
G^\square(x,y)
= \sum_{n\in\mathbb{Z}^3}
\sum_{m\in\{0,1\}^3}
(-1)^{m_1+m_2+m_3}\;
\Phi\!\left(x - y_{n,m}\right),
\end{equation}

where
\[
\Phi(z)=\frac{1}{4\pi\,|z|},
\qquad
y_{n,m}
=\bigl(
(-1)^{m_1} y_1 + 2n_1,\;
(-1)^{m_2} y_2 + 2n_2,\;
(-1)^{m_3} y_3 + 2n_3
\bigr),
\]

and

\[
n=(n_1,n_2,n_3)\in\mathbb{Z}^3,
\qquad
m=(m_1,m_2,m_3)\in\{0,1\}^3.
\]

This yields the Dirichlet Green's function in the unit cube \([0,1]^3\). The series is conditionally convergent.

We denote by $G_{\text{27}}$ the 27 source functions that are closest to the cube:
\begin{equation}\label{G27}
G^\square_{\text{27}}(x,y)
= \sum_{i,j,k=-1}^{1} 
  \sum_{m_1,m_2,m_3=0}^{1} 
  (-1)^{m_1+m_2+m_3}\;
  \Phi\Bigl(
    x - y^{(i,j,k)}_{m_1,m_2,m_3}
  \Bigr),
\end{equation}

\[
y^{(i,j,k)}_{m_1,m_2,m_3}
= \bigl(
(-1)^{m_1} y_1 + 2 i,\;
(-1)^{m_2} y_2 + 2 j,\;
(-1)^{m_3} y_3 + 2 k
\bigr).
\]

\begin{figure}\label{Fig1}
\centering
\includegraphics[width=1.2\linewidth]{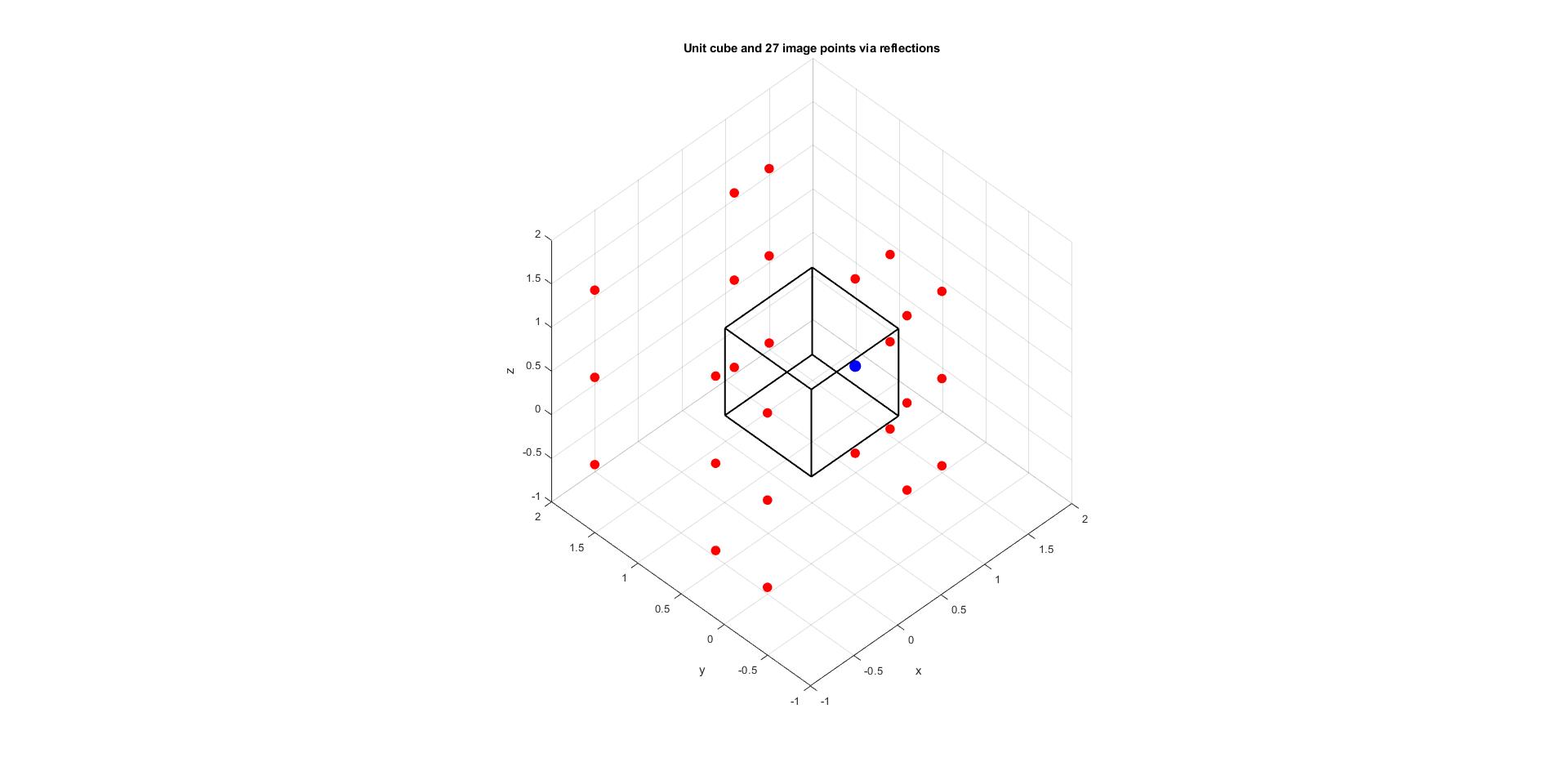}
\caption{The unit cube; The source point in blue plus 26 triple reflections in red.}
\label{G27fig}
\end{figure}

\subsection{The cylinder}\hfill

Let $x=(r,\theta,z)$,\; $y=(r',\theta',z')$, with $0\le r,r'<a$.
Let $\alpha_{n,m}$ be the $m$th positive zero of $J_n$.
The Dirichlet Green's function in the infinite cylinder is
\begin{equation}\label{Gcyl}
G(x,y)
= \frac{1}{2\pi}
  \sum_{n=-\infty}^{\infty}
  e^{\,i n(\theta-\theta')}
  \sum_{m=1}^{\infty}
  \frac{
    J_n\!\left(\tfrac{\alpha_{n,m} r}{a}\right)\,
    J_n\!\left(\tfrac{\alpha_{n,m} r'}{a}\right)
  }{
    \alpha_{n,m}\, a\,
    \bigl[J_{n+1}(\alpha_{n,m})\bigr]^2
  }\;
  e^{-(\alpha_{n,m}/a)\,|z-z'|}.
\end{equation}
It satisfies $G=0$ for $r=a$ and has the free-space singularity
$G(x,y)\sim \frac{1}{4\pi|x-y|}$ as $x\to y$.

\subsection{The finite cylinder}\label{Cyl}\hfill

Let $G_0(x,y)$ denote the Dirichlet Green's function of the infinite
cylinder of radius $1$.
For $y=(r',\theta',y_z)\in\{r'<1,\;0\le y_z\le1\}$, define the axial images
\[
y^{(k)}=(r',\theta',\,y_z^{(k)}),\qquad
y_z^{(k)} = (-1)^k y_z + 2k,\quad k\in\mathbb{Z}.
\]

The \textbf{Dirichlet Green's function} for the finite unit cylinder 
$\Omega = \{ (r, \theta, z) : 0 \le r < 1, 0 \le z \le 1 \}$, 
vanishing on the boundaries $r=1$, $z=0$, and $z=1$, 
is given by the image series
\[
G^\bigcirc(x, y) = \sum_{k \in \mathbb{Z}} (-1)^k 
G_0\bigl(r, \theta, z; r', \theta', y_z^{(k)}\bigr),
\]
where $y_z^{(k)}$ denotes the reflected $z$-coordinates. 
For later use, we introduce the \textbf{three-term image expansion}:
\begin{equation}\label{G3cyl}
G^\bigcirc_3(x; r', \theta', z') = G_0(x; r', \theta', z') 
- G_0(x; r', \theta', -z') 
- G_0(x; r', \theta', 2-z').
\end{equation}

\subsection{Reviewing the Corrected Collocation Method in 2D}\hfill

The paper \cite{Levin1980}  introduces a method 
to improve collocation approximations 
for the harmonic Dirichlet problem by addressing 
the singular behavior of the Green's function.

Standard collocation often fails to produce 
efficient global approximations because 
the distribution of "best" collocation points 
is strongly correlated with the singularities 
of the Green's function, which vary depending 
on the specific point being evaluated.

The paper \cite{Levin1980} suggests overcoming this by:
\begin{itemize}
    \item \textbf{Singular-Regular Decomposition:} 
    Splitting the normal derivative of the 
    Green’s function ($G$) into a \textbf{singular part} 
    ($S$) and a \textbf{regular part} ($R$).
    \item \textbf{Correction Term:} 
    Adding an integral correction term to the 
    initial collocation approximation. 
    This term utilizes the singular part $S$ 
    to "remove" the singular effect.
    \item \textbf{Quadrature Selection:} 
    Choosing collocation points as the abscissae 
    of a best quadrature formula
    for the regular part $R$. This ensures that the 
    approximation is efficient across the 
    entire domain, not just at a single point.
\end{itemize}

The paper proposes a further improvement 
by approximating the remaining error in 
the corrected solution. Since the correction 
term cancels the oscillatory behavior of 
the initial error, this "remainder" can be 
effectively modeled using another round of 
collocation with harmonic polynomials.

Let $\Omega \subset \mathbb{R}^2$ be a bounded domain with a smooth boundary 
$\Gamma$, and let $u$ be the harmonic function satisfying
\[
\Delta u = 0 \quad \text{in } \Omega, 
\qquad 
u = f \quad \text{on } \Gamma .
\]

The procedure in \cite{Levin1980} begins with a collocation approximation of the form
\[
u_h(x,y) = \sum_{j=1}^N a_j \, \phi_j(x,y),
\]
where $\{\phi_j\}$ are smooth trial functions (e.g., harmonic polynomials).
The coefficients $a_j$ are determined by imposing boundary conditions at
collocation points $\{(x_i,y_i)\}_{i=1}^N \subset \Gamma$:
\[
u_h(x_i,y_i) = f(x_i.y_i).
\]
  
The main idea in \cite{Levin1980} is to add a {\em correction term} using the singular part $S$ of the Green’s function
$G$.
The corrected approximation is defined as 
\[
u_h^c(x_0,y_0) 
 = u_h(x_0,y_0) 
   + \int_{\partial\Omega} \frac{\partial S}{\partial n}(x_0,y_0;x,y)\, \big(f(x,y)-\sum_{j=1}^N\alpha_j\phi_j(x,y)\big)\, ds .
\]

In \cite{Levin1980} it was demonstrated that,
for a unit square with jump discontinuities at the corners, the corrected collocation method reduces the error
from $0.5$ to $0.00125$,
using only $16$ collocation points.

The present paper is not just an extension of \cite{Levin1980} to the 3D case. It also makes use of the decomposition of the Green's function into the singular and regular parts, but in a substantially different way.  

\subsection{Challenges in Solving the Unit Cube Problem}\hfill

In many practical applications, 
solving the Laplace equation 
within a cubic domain subject to 
Dirichlet boundary conditions is 
a fundamental task. 

Often, the prescribed boundary data 
are discontinuous, which induces 
singular behavior in the solution 
near the boundaries, specifically 
in the vicinity of edges and corners.

From a numerical perspective, 
classical discretization techniques 
such as the Finite Element Method (FEM) 
can approximate these solutions. 
However, the presence of these 
singularities often leads to a 
deterioration in convergence rates. 
Consequently, achieving high-precision 
results typically requires
intensive local mesh refinement 
or the implementation of specialized 
singular-enrichment techniques. Capturing the correct 
singularities at the corners and edges of the cube is challenging and often leads to 
reduced accuracy if standard basis functions are used. Special techniques, such as 
mesh refinement near singularities or enriched basis functions, are needed to maintain accuracy without excessive computational effort.

Solving the Dirichlet problem for the Laplace equation in a cube,
or more generally in a polyhedral domain,
entails distinctive difficulties.
Even for smooth boundary data,
the solution may exhibit singular behavior near edges and corners.

The paper \cite{Mukh} investigates the high--resolution analysis
of such singularities in three dimensions using the neBEM solver.
The works \cite{Bacu,Cost} study the Sobolev regularity
of harmonic functions in polyhedral domains,
including the cube,
and show that the gradient of the solution typically becomes unbounded
in the vicinity of edges and vertices.
The paper \cite{Ong} develops computational techniques,
based on singular boundary elements,
that are tailored to capture and resolve these corner and edge effects.

We conclude that, in a three-dimensional cube,
the geometry itself may generate singularities,
even when the prescribed boundary data are smooth.
These singularities do not occur in the solution $u$,
which remains continuous,
but rather in its derivatives,
especially near edges and corners.

The present paper is not merely a three-dimensional extension 
of \cite{Levin1980}. 
Although it also relies on a decomposition of the Green’s function 
into singular and regular components, the decomposition is 
exploited here in a fundamentally different manner. 
In the following section, we introduce a two-phase method 
for the high-order evaluation of the solution. 
The resulting approach remains stable and accurate 
even in the presence of sharp singularities.

\section{The S--R method}

\subsection{Singular–Regular Decomposition of the Green’s Function}\hfill

In certain cases, the Green's function is known explicitly, 
allowing the solution to the harmonic Dirichlet problem 
to be computed via boundary integrals using the 
representation formula \eqref{eq:GR}. 
However, for many domains, most notably the unit cube, 
the Green's function is expressed only as a formal series, 
making its practical evaluation non-trivial. 
In the following sections, we adopt the unit cube 
as our primary model setting.

We assume that the Green's function associated with a domain $\Omega$
admits the decomposition
\begin{equation}\label{Dec}
G(x,y)=S(x,y)+R(x,y),
\end{equation}
where $S$ represents the singular part of $G$
and $R$ its regular part.
In what follows, we further assume that $S$ is available
and can be computed with relative ease,
whereas $R$ is not.

\begin{definition}
Let $U \subset \mathbb{R}^n$ be an open domain 
and $\Omega \subset U$ be a closed domain. 
We say that $R(x,y)$ is \textbf{regular of order $m$} 
on $U \times \Omega$ if, for every multi-index $\alpha$ 
with $|\alpha| \le m$, the partial derivative 
$D_x^\alpha R(x,y)$ exists and is bounded 
on $U \times \Omega$.
\end{definition}

\begin{remark}
Note that the definition implies that, 
for each fixed $y \in \Omega$, 
the derivatives $D_x^\alpha R(x,y)$ exist 
and are bounded for all $x \in U$.
\end{remark}

A two-dimensional instance of the decomposition \eqref{Dec}, together with 
its practical use, is given in \cite{Levin1980}. Here, we take as our main example the Green's function for the unit cube, 
introduced above in \eqref{Gcube}.
We decompose $G^\square$ into a singular-regular form as
\begin{equation}
G^\square(x,y) = G^\square_{27}(x,y) + R^\square_{27}(x,y),
\label{SRcube}
\end{equation}
where $G^\square_{27}$ is the partial sum defined in \eqref{G27}, 
and $R^\square_{27}$ is the remainder of the tri-infinite 
expansion \eqref{Gcube}. For any $y \in [0,1]^3$, 
the remainder $R^\square_{27}(x,y)$ is harmonic in $x$ 
throughout the enlarged domain $(-1,2)^3$ and thus 
constitutes the \textbf{regular part} of $G^{\square}$. 
The partial sum $G^\square_{27}(x,y)$ serves as the 
\textbf{singular part}; it contains the primary singularity 
within the unit cube and 26 additional singularities 
in $(-1,2)^3$ arising from triple reflections.

Similarly, the Green's function for the unit cylinder 
$\Omega = \{ (r, \theta, z) : 0 \le r < 1, 0 \le z \le 1 \}$ 
admits the decomposition
\begin{equation}
G^\bigcirc(x,y) = G^\bigcirc_3(x,y) + R^\bigcirc_3(x,y),
\label{SRcyl}
\end{equation}
where the remainder $R^\bigcirc_3$ is harmonic 
in the extended domain $-1 < z < 2$.

\subsection{The Singular-Regular (S--R) approach}\hfill

Assuming the Green's function admits the decomposition \eqref{Dec}, the solution of the Dirichlet problem in $\Omega$ is written as 
\begin{equation}
\label{eq:SR}
u(x) 
= H_S(x)+H_R(x),
\qquad x \in \Omega.
\end{equation}
where
\begin{equation}\label{H_R}
H_R(x)\equiv \int_{\partial\Omega}
    \frac{\partial R}{\partial n_y}(x,y)f(y)\, ds_y .
\end{equation}
and
\begin{equation}\label{H_S}
H_S(x)\equiv \int_{\partial\Omega}
    \frac{\partial S}{\partial n_y}(x,y)f(y)\, ds_y .
\end{equation}

\begin{assumption}\label{Rassumption}
Motivated by the cases of the unit cube and the unit cylinder, 
we assume that for any fixed $y \in \Omega$, 
the regular part $R(\cdot, y)$ is harmonic in an open domain $U$ 
such that $\Omega \subset U$.
\end{assumption}

The following elementary observations constitute the foundation
of the proposed S--R approach.

\begin{corollary}\label{maincor}
In view of Assumption~\ref{Rassumption},
the solution admits the decomposition \eqref{eq:SR} 
where 
\begin{enumerate}
\renewcommand{\labelenumi}{(\Alph{enumi})}
\item $H_R$ is harmonic in $U$.
\item All singular behavior of the solution $u$ is contained in $H_S$.
\end{enumerate}
\end{corollary}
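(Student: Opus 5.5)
The plan has three parts: first the decomposition itself, then the harmonicity of $H_R$ by differentiating under the integral sign, and finally statement (B), which follows at once from (A).

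\textbf{The decomposition \eqref{eq:SR}.} Start from the Green representation \eqref{eq:GR}. Insert the splitting \eqref{Dec}, $G=S+R$, and differentiate in the normal direction at $y\in\partial\Omega$. This gives
\[
\frac{\partial G}{\partial n_y}(x,y)=\frac{\partial S}{\partial n_y}(x,y)+\frac{\partial R}{\partial n_y}(x,y).
\]
Integrating against $f$ then yields $u=H_S+H_R$, with $H_R$ and $H_S$ as in \eqref{H_R} and \eqref{H_S}, for every $x\in\Omega$. The only point to check is that both integrals exist separately. For $x$ in the interior of $\Omega$, $R(x,\cdot)=G(x,\cdot)-S(x,\cdot)$ is smooth near $\partial\Omega$ whenever $G$ and $S$ are. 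In the cube case, $S=G^\square_{27}$ has its singularities at image points, none of which lies on $\partial\Omega$ when $x$ is interior.

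\textbf{Statement (A): harmonicity of $H_R$.} I will differentiate under the integral sign. By Assumption~\ref{Rassumption}, $R(\cdot,y)$ is harmonic in $U\supset\Omega$. To make sense of the normal derivative in $y$, I will use the symmetry $R(x,y)=R(y,x)$; this holds because both $G$ and $S=G^\square_{27}$ are symmetric, as one sees by pairing each image with its reflection. With symmetry, harmonicity in one variable transfers to the other, and $\partial_{n_y}$ commutes with $\Delta_x$. Hence $\Delta_x \partial_{n_y}R(x,y)=0$ for $x\in U$ and $y\in\partial\Omega$.

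To justify the interchange $\Delta_x\int=\int\Delta_x$, I need uniform bounds on $D_x^\alpha\partial_{n_y}R$ for $x$ in compact subsets $K\subset U$ and $y\in\partial\Omega$. This is where I would use regularity of order $m\ge 3$ in the sense of the definition above; for the cube it is evident, since the remaining images stay a positive distance away from $(-1,2)^3$. Given these bounds and $f\in L^1(\partial\Omega)$ (which follows from $f\in L^2$), dominated convergence shows that $H_R\in C^2(U)$ and $\Delta H_R=0$ in $U$. In fact $H_R$ is real-analytic there.

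\textbf{Statement (B).} This follows from (A). Since $H_R$ is harmonic on the open set $U\supset\Omega$, it is $C^\infty$ up to and across $\partial\Omega$, with all derivatives bounded on $\overline\Omega$. So any unbounded derivative, or any failure of smoothness of $u$ near corners, edges or jumps of $f$, must come from $u-H_R=H_S$.

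\textbf{Main obstacle.} I expect the hardest step to be the uniform control of $\partial_{n_y}R(x,y)$ near $\partial\Omega$: the $y$-regularity of $R$ and the convergence of the conditionally convergent image series. My first attempt would be to group the lattice terms in $m$-blocks. Differentiating once produces dipole-like sums decaying like $|n|^{-3}$ or faster after cancellation, and these converge absolutely and uniformly on compacts of $(-1,2)^3$.
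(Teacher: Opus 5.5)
Your overall route matches the paper's. You obtain \eqref{eq:SR} by linearity after inserting \eqref{Dec} into \eqref{eq:GR}. You get (A) from harmonicity of $x\mapsto\partial_{n_y}R(x,y)$ on $U$ followed by integration in $y$; the paper does this for the cube by termwise differentiation of the image series. Then (B) is read off from (A).

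The step that does not work is the symmetry detour you use to get $y$-regularity of $R$. There are three problems with it.
\begin{enumerate}
\item $R(x,y)=R(y,x)$ is not available under Assumption~\ref{Rassumption}. It needs $S$ to be symmetric, which you check only for the cube and which a general splitting \eqref{Dec} need not satisfy.
\item Even where symmetry holds, the assumption only controls $R(\cdot,y)$ for poles $y\in\Omega$. So symmetry tells you $R(x,\cdot)$ is harmonic only when $x\in\Omega$. For $x\in U\setminus\Omega$ you learn nothing about $R(x,\cdot)$ near $\partial\Omega$. Yet that is exactly where (A) has content, since (B) rests on $H_R$ continuing harmonically across $\partial\Omega$.
\item Symmetry does not by itself let you interchange $\partial_{n_y}$ and $\Delta_x$; that requires joint regularity in $(x,y)$.
\end{enumerate}
So your conclusion $\Delta_x\partial_{n_y}R(x,y)=0$ for all $x\in U$ does not follow from what precedes it.

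Drop symmetry and assume what is actually needed. The paper uses this tacitly when it says differentiation in $y$ does not affect harmonicity in $x$. What you need is $y$-regularity of $R$ that is locally uniform in $x$, e.g.\ $\nabla_y R$ bounded on $K\times\Omega$ for each compact $K\subset U$.
\begin{itemize}
\item Then the one-sided quotients $h^{-1}\bigl(R(x,y)-R(x,y-h\,n_y)\bigr)$ are harmonic in $x\in U$, locally bounded, and converge to $\partial_{n_y}R(x,y)$. The limit is therefore harmonic, because locally bounded families of harmonic functions are normal.
\item Local boundedness of $\partial_{n_y}R$ on $K\times\partial\Omega$, combined with Fubini and the ball mean-value property, then gives harmonicity of $H_R$. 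You need no third-order bounds. In any case, the paper's notion of regularity bounds $D_x^\alpha R$, not $D_x^\alpha\partial_{n_y}R$.
\end{itemize}

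For the cube this regularity can be checked directly. If $y\in[-\epsilon,1+\epsilon]^3$ and $n\notin\{-1,0,1\}^3$, every image $y_{n,m}$ has a coordinate $\ge 3-\epsilon$ or $\le -3+\epsilon$. It therefore stays at distance at least $1-\epsilon$ from $(-1,2)^3$, so the tail terms are jointly smooth there. Once the blocked series converges locally uniformly with its derivatives, you may differentiate it termwise in $y$.

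For that convergence, your decay count is off. $\sum_{n\in\mathbb{Z}^3}|n|^{-3}$ diverges, so an $|n|^{-3}$ rate would not give absolute convergence. The eight-term $m$-block is a third mixed difference in $y_1,y_2,y_3$, hence $O(|n|^{-4})$. After one $y$-derivative it is still $O(|n|^{-4})$ (two differences and one derivative), and that is summable.
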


\subsection{The approximation procedure}\hfill

Based on Corollary~\ref{maincor}, we propose the following
approximation procedure.

\begin{enumerate}
\item Select $N$ points $\{x_i\}_{i=1}^N$ distributed over $\partial\Omega$.

\item Evaluate
\begin{equation}\label{tildeHS}
H_S(x_i)=\int_{\partial\Omega}
    \frac{\partial S}{\partial n_y}(x_i,y)f(y)\, ds_y,
\qquad i=1,\ldots,N,
\end{equation}
using a suitable quadrature rule, denote the resulting approximations by $\tilde H_S(x_i)$.

\item Compute
\begin{equation}\label{HRtilde}
\tilde H_R(x_i)=u(x_i)-\tilde H_S(x_i),
\qquad i=1,\ldots,N.
\end{equation}

\item Construct a harmonic approximation $P_N$ of $H_R$
from the data $\{\tilde H_R(x_i)\}_{i=1}^N$.

\item For any given $x\in\Omega$, approximate $H_S(x)$
by $\tilde H_S(x)$ using a suitable quadrature rule.

\item Define the approximation $u_N$ to $u$ at $x\in\Omega$ by
\begin{equation}\label{uN}
u_N(x)=\tilde H_S(x)+P_N(x).
\end{equation}
\end{enumerate}

\begin{observation}
Once the global harmonic approximation of $H_R$ is available, 
the evaluation of $u$ at any prescribed point $x$ 
reduces to a separate and independent computation.
\end{observation}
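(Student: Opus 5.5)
The statement to be justified is the final Observation: once $P_N$ is available, evaluating $u_N(x)$ at a prescribed $x$ is a separate, independent computation. The plan is to read this off the structure of the procedure. In \eqref{uN}, $u_N(x)=\tilde H_S(x)+P_N(x)$, and I will show that each summand is determined by data that either does not depend on $x$ or depends only on $x$, $f$, and the known singular kernel $S$.

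\emph{The regular part.} $P_N$ is built in step (4) solely from the values $\{\tilde H_R(x_i)\}_{i=1}^N$. By \eqref{HRtilde}, these involve only $u(x_i)=f(x_i)$ (the $x_i$ lie on $\partial\Omega$) and the quadratures $\tilde H_S(x_i)$ from step (2). So $P_N$ is a fixed harmonic function, independent of the later evaluation point $x$. By Corollary~\ref{maincor}(A), $H_R$ is harmonic on $U\supset\Omega$. It is therefore legitimate to represent $H_R$ by a single global harmonic approximant that is computed once and then evaluated at any $x\in\Omega$ at the cost of evaluating the basis. For harmonic polynomials or fundamental solutions this costs $O(N)$.

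\emph{The singular part.} $\tilde H_S(x)$ is the quadrature approximation of \eqref{H_S}. By the decomposition \eqref{Dec} and the assumption that $S$ is explicitly computable (for the cube, the 27 image terms in \eqref{G27}), the integrand $\frac{\partial S}{\partial n_y}(x,y)f(y)$ can be evaluated directly for the given $x$. No information about $R$ and no other evaluation point enters this computation. Hence $\tilde H_S(x)$ is a stand-alone boundary quadrature that depends only on $x$, $f$ and $S$.

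Combining the two parts, $u_N(x)$ uses only the precomputed $P_N$ plus one independent quadrature. Evaluations at different points neither interact nor require re-solving the collocation system, which is the claim. The only point needing care is that the quadrature in step (5) may be nearly singular when $x$ is close to $\partial\Omega$. This does not affect independence, since the quadrature rule can be adapted to $x$ locally. It does affect accuracy, but accuracy is a separate issue from what the Observation asserts.
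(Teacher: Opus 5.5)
Your argument is correct and is essentially the paper's own reasoning. The paper gives no separate proof: it reads the Observation off steps (4)--(6) of the procedure, where $P_N$ is fixed once from the collocation data $\{\tilde H_R(x_i)\}$, and $u_N(x)=\tilde H_S(x)+P_N(x)$ then requires only a stand-alone boundary quadrature at $x$. Your remark that near-boundary quadrature affects accuracy but not independence also matches the paper's next observation that $\tilde H_S$ is non-harmonic because the rule depends on $x$.
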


\begin{observation}
The approximation $u_N$ in \eqref{uN} consists of a harmonic
approximation $P_N$ of $H_R$ and a component $\tilde H_S$,
which is generally non-harmonic. Indeed, $\tilde H_S(x)$ is
typically not harmonic in $x$, since the quadrature rule
used in step~(5) above depends on the evaluation point $x$.
\end{observation}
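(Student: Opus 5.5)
The plan is to treat the two components of $u_N$ in \eqref{uN} separately. Harmonicity of $P_N$ will follow from the construction, while for $\tilde H_S$ I will compare an $x$-independent quadrature with the $x$-dependent one used in step~(5). Since the statement only says that $\tilde H_S$ is \emph{typically} non-harmonic, the second half will be an explanation plus a concrete mechanism, not a universal theorem.

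First, by step~(4), $P_N$ is chosen from a harmonic approximation space: harmonic polynomials, or fundamental solutions with sources placed outside $U$. Each basis element is harmonic in $\Omega$, so the finite linear combination $P_N$ is harmonic in $\Omega$.

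Second, I note that the exact $H_S$ is harmonic in the interior of $\Omega$. By \eqref{Dec}, $S(\cdot,y)=G(\cdot,y)-R(\cdot,y)$. Here $G(\cdot,y)$ is harmonic away from $y$, and $R(\cdot,y)$ is harmonic in $U$ by Assumption~\ref{Rassumption}. Hence the kernel $\partial S/\partial n_y(x,y)$ is harmonic in $x\in\Omega^\circ$ for every $y\in\partial\Omega$. For $x$ in a compact subset of $\Omega^\circ$ the kernel is smooth, so differentiation under the integral in \eqref{H_S} gives $\Delta H_S=0$.

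The key step is to examine a quadrature approximation of $H_S$. Suppose the rule has nodes $y_j$ and weights $w_j$ that do not depend on $x$. Then
\[
\tilde H_S(x)=\sum_j w_j\,\frac{\partial S}{\partial n_y}(x,y_j)\,f(y_j)
\]
is a finite combination of harmonic functions of $x$, and is therefore harmonic. So non-harmonicity can arise only from the dependence of the rule on $x$, which is exactly what step~(5) allows. In step~(5) the rule is adapted to the evaluation point: nodes are clustered near the boundary point closest to $x$, and the refinement level and panel splitting change with $\operatorname{dist}(x,\partial\Omega)$. This adaptation is needed because the kernel is nearly singular there.

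With $x$-dependent nodes and weights, $y_j=y_j(x)$ and $w_j=w_j(x)$, and $\Delta_x\tilde H_S$ acquires extra terms. These involve $\nabla w_j$, $\nabla y_j$, the tangential derivatives of the kernel in $y$, and $\nabla f$. In general they do not cancel.

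To make ``typically'' concrete, I would use the most common situation: a rule that switches discretely, for example at a distance threshold $\operatorname{dist}(x,\partial\Omega)=\delta$. Across the switching surface $\tilde H_S$ generally jumps by the difference of the two quadrature errors, which is nonzero for generic $f$. A discontinuous function cannot be harmonic.

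The step I expect to be the main obstacle is giving a clean general statement rather than a generic one. For special data, such as $f\equiv0$, or rules that happen to integrate the kernel exactly, $\tilde H_S$ may still be harmonic. I would therefore phrase the conclusion as: harmonicity holds for $x$-independent rules and fails for generic $f$ whenever the rule varies with $x$. The switching-threshold discontinuity would serve as the explicit witness.
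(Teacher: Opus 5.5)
Your proposal is correct and follows the paper's own reasoning. The paper's entire justification is that the step-(5) quadrature rule depends on $x$, and your further steps make that one-line argument explicit: an $x$-independent rule gives a finite combination of kernels harmonic in $x$, while a discrete refinement threshold makes $\tilde H_S$ jump. One small imprecision: for $P_N^{\mathrm{II}}$ the sources $p_i=\alpha x_i$ only need to lie outside $\Omega$ (true for every $\alpha>1$), not outside $U$, for $P_N$ to be harmonic in $\Omega$.
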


In the sequel, we examine the individual steps of the approximation
procedure, with particular emphasis on the unit cube.

\subsection{Theoretical and implementation details for the unit cube}\hfill

For the unit cube, we recall that the singular part is 
$S = G_{\square}^{27}$, as defined in \eqref{G27}, 
and the corresponding regular part is 
$R = R_{\square}^{27}$, see \eqref{SRcube}. 
As shown below, $R(\cdot,y)$ is harmonic in the enlarged domain 
$(-1,2)^3$. 
We now prove that the associated regular component of the solution 
is likewise harmonic in $(-1,2)^3$.

\begin{proposition}
Let $\Omega=[0,1]^3$ and consider the decomposition
\eqref{SRcube}, where $G_{\square}$ is given by the lattice
representation \eqref{Gcube} and $G_{\square}^{27}$ by the
truncated sum \eqref{G27}.  
Then, for every $y\in\Omega$, the remainder
$R_{\square}^{27}(\cdot,y)$ is harmonic in the enlarged domain
$(-1,2)^3$.  Consequently, the function
\[
H_R(x)=\int_{\partial\Omega}
\frac{\partial R_{\square}^{27}}{\partial n_y}(x,y)\,
f(y)\,dS_y,
\]
defined in \eqref{H_R}, is harmonic in $(-1,2)^3$.
\end{proposition}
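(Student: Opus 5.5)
The plan is to locate the singularities of each term in the image series \eqref{Gcube}, show that none of the terms remaining after removing the 27 nearest ones can blow up in $(-1,2)^3$, and then pass harmonicity through the boundary integral defining $H_R$.

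\textbf{Step 1: locating the image points.} Fix $y\in\Omega=[0,1]^3$ and consider the image points $y_{n,m}$ of \eqref{Gcube}. In the $k$-th coordinate the image is $(-1)^{m_k}y_k+2n_k$. For $m_k=0$ this lies in $[2n_k,2n_k+1]$, and for $m_k=1$ it lies in $[2n_k-1,2n_k]$. Hence the closure of $(-1,2)$ can meet the $k$-th coordinate only when $n_k\in\{-1,0,1\}$, apart from boundary contacts at $\pm1$ and $2$. In particular, every term with some $|n_k|\ge2$ has its image at distance at least some $\delta>0$ from any compact $K\subset(-1,2)^3$.

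One point needs care. I will check that the truncation in \eqref{G27}, namely $i,j,k\in\{-1,0,1\}$ with all $m$, removes exactly the images whose coordinates can fall into $[-1,2]$. For instance, $n_k=-1,m_k=0$ gives $[-2,-1]$, which touches $-1$ only when $y_k=0$, a boundary point. I will therefore treat the relevant images as lying on or outside $\partial(-1,2)^3$, so the open enlarged cube stays free of remainder singularities.

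\textbf{Step 2: harmonicity of the remainder.} Write $R^\square_{27}(x,y)$ as the sum of $(-1)^{|m|}\Phi(x-y_{n,m})$ over the terms with $\max_k|n_k|\ge2$. Each such term is harmonic in $x$ on $(-1,2)^3$, since $\Phi$ is harmonic away from its pole. The main obstacle is convergence: the text notes that the series is only conditionally convergent. I would group the eight terms with a common $n$ (the sum over $m$). For fixed $n$, this grouped sum is an eighth-order alternating difference, so by Taylor expansion of $\Phi(x-2n-\cdot)$ it decays like $|n|^{-1-3}$ or better, with derivatives in $x$ decaying similarly. Since $\sum_{n}|n|^{-4}$ converges in $\mathbb{Z}^3$, the grouped series converges absolutely and uniformly on compact subsets of $(-1,2)^3$, uniformly also in $y\in\Omega$. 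A locally uniform limit of harmonic functions is harmonic (Weierstrass/Harnack), so $R^\square_{27}(\cdot,y)$ is harmonic in $(-1,2)^3$. Adopting this grouped summation is also the natural interpretation of \eqref{Gcube}.

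\textbf{Step 3: harmonicity of $H_R$.} The same grouping gives convergence of the series with $\partial/\partial n_y$ applied, together with joint smoothness and local uniform boundedness of $D_x^\alpha\,\partial_{n_y}R^\square_{27}(x,y)$ on $K\times\partial\Omega$. Indeed, $\partial_{n_y}$ is just a derivative in one coordinate of $y$, and it commutes with the harmonicity in $x$. Then, for $f\in L^1(\partial\Omega)$ (in particular $L^2$), I differentiate under the integral sign by dominated convergence to get $\Delta_x H_R=\int_{\partial\Omega}\Delta_x\partial_{n_y}R\,f\,dS_y=0$ on $(-1,2)^3$. Continuity of the integrand in $x$ gives continuity of $H_R$, so $H_R$ is harmonic there.

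The expected difficulty is the rigorous convergence bound in Step 2, handled by the grouped $m$-sum decay. The boundary-contact bookkeeping in Step 1 is the secondary issue.
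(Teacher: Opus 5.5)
Your proof is correct and takes the same route as the paper. The remainder images (those with $n\notin\{-1,0,1\}^3$) lie outside $(-1,2)^3$, so each term is harmonic there. Harmonicity then passes to the series on compact subsets, and from there through $\partial_{n_y}$ and the boundary integral to $H_R$.

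Your grouping of the eight $m$-terms for fixed $n$ is a genuine improvement in rigor. It is a mixed third-order central difference of $\Phi$, hence $O(|n|^{-4})$; calling it an ``eighth-order'' difference is a misnomer, though your decay rate is right. This makes rigorous the convergence step that the paper's proof only asserts.

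The boundary-contact worry in Step~1 is unnecessary. Every image with $\max_k|n_k|\ge 2$ lies at distance at least $1$ from $[-1,2]^3$. The contact you describe, $y_k-2=-1$, occurs at $y_k=1$, not $y_k=0$, and it can only happen for terms of $G^\square_{27}$.
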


\begin{proof}
By \eqref{Gcube}, the Green’s function is represented as a
lattice sum of free-space source functions
$\Phi(x-y_{n,m})$, while \eqref{G27} retains only those
terms with $n\in\{-1,0,1\}^3$.
Hence, in the decomposition \eqref{SRcube}, the remainder
$R_{\square}^{27}$ consists precisely of those image terms
with $n\notin\{-1,0,1\}^3$.
For any $y\in[0,1]^3$, such image points $y_{n,m}$ lie
outside the box $(-1,2)^3$.
Therefore, each term
$x\mapsto\Phi(x-y_{n,m})$ is harmonic throughout
$(-1,2)^3$.
On compact subsets of $(-1,2)^3$, the series defining
$R_{\square}^{27}(\cdot,y)$ admits termwise differentiation
with respect to $x$ (for instance, via uniform convergence
of the second-derivative series),
and thus $\Delta_x R_{\square}^{27}(x,y)=0$
in $(-1,2)^3$.
Differentiation with respect to $y$ does not affect
harmonicity in $x$, and hence
$\partial R_{\square}^{27}/\partial n_y(\cdot,y)$
is harmonic in $x$ on $(-1,2)^3$.
Finally, the representation \eqref{H_R}
shows that $H_R$ is an integral of harmonic functions
with respect to $y$, and is therefore harmonic
in $(-1,2)^3$.
\end{proof}

\subsubsection{Computing $\tilde H_R$ on $\partial\Omega$}\label{HRxi}\hfill

We begin with Step~(2), namely the computation of
$\tilde H_S(x_i)$ for $x_i \in \partial\Omega$.

As an illustrative example, consider a point $x_i$
located on the upper face of the cube, namely $z=1$.
All source points in \eqref{G27} that lie in the plane $z=1$
contribute exactly the value $f(x_i)$ to $H_S(x_i)$.
This follows from the analysis presented in
Section~\ref{HalfSpace}.
The remaining nine reflection points lie in the plane $z=-1$.
Consequently, the corresponding normal derivatives are regular 
along $\partial\Omega$, and their contribution to the integral 
in \eqref{tildeHS} can therefore be evaluated to any prescribed 
accuracy using standard quadrature rules. 
By \eqref{HRtilde}, the resulting quantity provides 
the desired approximation of $H_R$.

\subsubsection{Choosing the collocation points}\hfill

The choice of the points $\{x_i\}_{i=1}^N$ in Step~(1)
has a direct impact on the quality of the harmonic interpolant $P_N$
constructed in Step~(4).
We examined two alternatives.
The first employs points taken from a uniform grid on each face of the cube,
whereas the second uses a tensor--product grid of Gaussian points.
Although both choices produced approximations of comparable accuracy,
the uniform distribution yielded a markedly better conditioned
interpolation matrix.
This improvement in conditioning translated into enhanced numerical stability
of the overall procedure.

Assuming $N=6n^2$, we distribute $n^2$ points on each face of the cube.
For instance, for the face on $z=0$, 
the uniform placement is given by $\left({(2i-1)}/{2n},\,{(2j-1)}/{2n},0\right), i,j=1,\ldots,n.$ 
The remaining faces are treated analogously.

\subsubsection{The harmonic approximation $P_N$.}\label{PN}\hfill

Assume that the values $\{\tilde H_R(x_i)\}_{i=1}^N$
are computed with an approximation error of order $\varepsilon$.
Our goal is to construct a harmonic approximation $P_N$
to $H_R$ based on this data. We investigated two alternative approximations:
$P_N^{\mathrm{I}}$, obtained by harmonic polynomial interpolation,
and $P_N^{\mathrm{II}}$, constructed via the Method of Fundamental Solutions.
We evaluated their performance in terms of accuracy and numerical stability.

\noindent\textbf{The approximation $P_N^{\mathrm{I}}$:}

A natural first candidate is to construct the approximation by interpolation with harmonic polynomials.
In this approach, we rely on the following classical approximation result
\cite{Walsh,dai2013}.

Let $\Omega$ be a bounded domain in $\mathbb{R}^3$
(e.g., the unit cube centered at the origin).
Assume that $u$ is harmonic in an open neighborhood
containing a ball $B_R(0)$ that strictly contains $\Omega$.
For the unit cube centered at the origin,
\[
\Omega \subset B_\rho(0),
\qquad \rho=\frac{\sqrt{3}}{2}.
\]
 In our case, since $H_R$ is harmonic in $(-1,2)^3$,
after translating the unit cube to be centered at the origin,
the function remains harmonic in a neighborhood
containing a ball of radius $R= 3/2$.

Expand $u$ in solid spherical harmonics about the origin:
\[
u(x)
=
\sum_{\ell=0}^{\infty}
\sum_{m=-\ell}^{\ell}
a_{\ell m}\,
r^{\ell}\,
Y_\ell^{m}(\theta,\phi),
\qquad r=|x|.
\]

Let $Q_N$ denote the truncation of this expansion to
$\ell\le N$, which is a harmonic polynomial of degree $N$.
Then, for any $0<\rho<R$,
\[
\|u-Q_N\|_{L^\infty(B_\rho(0))}
\le
C(u,R)
\left(\frac{\rho}{R}\right)^{N+1}.
\]

Thus, the approximation error decays geometrically in $N$.
The constant $C(u,R)$ depends on an upper bound of $u$
on $B_R(0)$. By the maximum principle, 
as the difference $u - Q_N$ is itself harmonic, 
the maximum error on the interior is bounded by its values on the boundary.

In the present setting, we do not employ a spherical harmonic expansion.
Instead, we construct a harmonic polynomial approximant of $H_R$
by interpolating the approximate data
$\{\tilde H_R(x_i)\}_{i=1}^N$, and we denote it as $P_N^\mathrm{I}$.
If $H_R$ extends harmonically to a neighborhood strictly larger than
$\Omega$, its polynomial approximations exhibit a geometric rate of convergence. 
Provided that the interpolation procedure is stable and that the data
$\tilde H_R(x_i)$ approximate $H_R(x_i)$ with sufficiently high order,
the resulting interpolant $P_N^\mathrm{I}$ retains the same geometric rate of
convergence, up to the additional error introduced by the data
approximation.

\noindent\textbf{The approximation $P_N^\mathrm{II}$}: 

We further investigated an alternative interpolation strategy
for approximating $H_R$.
In this approach, the harmonic approximant is represented
as a superposition of source functions.
For clarity, we describe the method in the case of the unit cube
centered at the origin, $[-0.5,0.5]^3$.
Assume that the values of a harmonic function $u$
are given at points $\{x_i\}_{i=1}^N.$ distributed on the faces of the cube. Define a set of source points, 
\begin{equation}\label{alpha}
p_i=\alpha x_i,\ \ i=1,...,N.\ \ \alpha>1.
\end{equation}

Figure~\ref{PSource} illustrates a representative configuration
of 150 collocation points on the faces of the cube
and their associated 150 source points, with $\alpha=3$.

\begin{figure}
\centering
\includegraphics[width=1.1\linewidth]{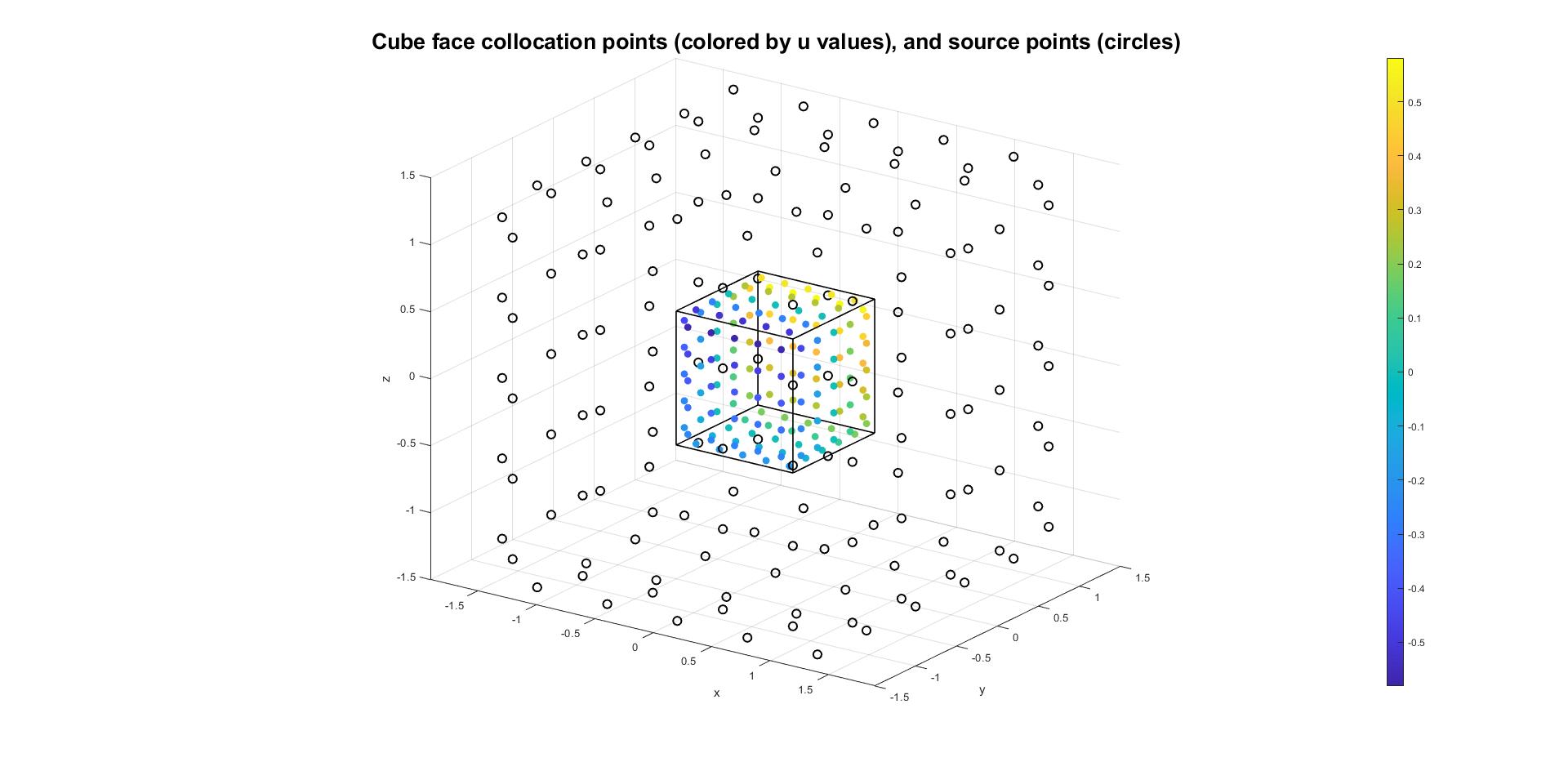}
\caption{The figure shows the 150 interpolation points on the cube's faces, colored by their sampled values, along with the corresponding source points (marked by circles) used in the interpolation basis.}\label{PSource}
\end{figure}
We seek an approximation $P_N^\mathrm{II}$ of the form
\begin{equation}\label{Phisum}
P_N^\mathrm{II}(x)=\sum_{j=1}^N c_j\,\Phi\bigl(|x-p_j|\bigr),
\end{equation}
where $\Phi$ is the source function defined in \eqref{Phi}.
The coefficients $\{c_j\}_{j=1}^N$ are determined by enforcing the
interpolation conditions
\begin{equation}\label{Phiinterpolation}
P_N^\mathrm{II}(x_i)=\sum_{j=1}^N c_j\,\Phi\bigl(|x_i-p_j|\bigr),
\qquad i=1,\ldots,N.
\end{equation}

The representation \eqref{Phisum} corresponds to the classical
Method of Fundamental Solutions (MFS); see, e.g.,
\cite{Fairweather,Barnett}.
At the continuous level, this ansatz can be viewed as a discrete
single--layer potential with sources located outside the domain,
and its validity is supported by the potential-theoretic
foundations of harmonic boundary value problems.
At the discrete level, the resulting interpolation matrix
$A_{ij}=\Phi(|x_i-p_j|)$ is typically invertible for generic
placements of collocation and source points; however,
invertibility is not guaranteed in general, and the system
may become severely ill--conditioned, particularly when the
sources approach the boundary.

\noindent\textbf{Comparing $P_N^\mathrm{I}$ and $P_N^\mathrm{II}$}:

We conducted a series of numerical experiments
to identify the method most suitable for our specific application.
Given a harmonic function defined on $(-1.5,1.5)^3$,
the experiments examined its approximation on the unit cube
$[-0.5,0.5]^3$. We present the results for the test case with 150 data points,
distributed on a uniform grid over each of the six faces of the unit cube. The source points for $P_N^\mathrm{II}$ are defined by \eqref{alpha} with $\alpha=3$.
Both approximation methods were applied to the following two harmonic
test functions:
\[
u_1(x,y,z)=\cos(0.6x)\sin(0.8y)\exp(z),
\]
\[
u_2(x,y,z)=\frac{1}{\sqrt{x^2+y^2+(z-1.6)^2}}.
\]
We note that $u_1$ is harmonic throughout $\mathbb{R}^3$,
whereas $u_2$ possesses a singularity located near the cube
$[-1.5,1.5]^3$.

\medskip
\begin{table}[ht]
\centering

\renewcommand{\arraystretch}{1.4}      % row spacing
\setlength{\tabcolsep}{14pt}           % column spacing

\begin{tabular}{lccc}
\hline
Method & $\mathrm{cond}(A)$ & Error in $u_1$ & Error in $u_2$ \\
\hline
$P^{\mathrm{I}}$  & $7.61\times 10^{17}$ & $8.13\times 10^{-6}$ & $8.26\times 10^{-4}$ \\
$P^{\mathrm{II}}$ & $4.25 \times 10^{8}$ & $1.83\times 10^{-5}$ & $1.07\times 10^{-5}$ \\
\hline
\end{tabular}

\caption{Condition number and approximation errors for the two test functions.}
\label{tab:comparison}
\end{table}

Table~\ref{tab:comparison} summarizes the conditioning of the
interpolation matrix and the corresponding approximation errors
for the two methods.
The polynomial interpolation approach $P^{\mathrm{I}}$
leads to an extremely ill--conditioned system
($\mathrm{cond}(A)\approx 7.61\times 10^{17}$),
yet it achieves high accuracy for the globally smooth test function $u_1$.
However, its performance deteriorates significantly for $u_2$,
whose nearby singularity reduces the effective radius of harmonic continuation,
resulting in an error of order $10^{-4}$.
In contrast, the method of fundamental solutions $P^{\mathrm{II}}$
produces a substantially better conditioned system
($\mathrm{cond}(A)\approx 4.25\times 10^{8}$)
and delivers consistently accurate results for both test functions,
with errors of order $10^{-5}$.
These results indicate that $P^{\mathrm{II}}$ is more robust, 
especially in the presence of nearby singularities. 
When the number of data points was increased to $49$ on each face of the cube, 
the approximation error of $P^{\mathrm{II}}$ for $u_2$ decreased to 
$4.15 \times 10^{-7}$. 
However, this improvement came at the cost of a larger 
condition number, with $\mathrm{cond}(A) = 3.34 \times 10^{12}$, 
indicating increased ill-conditioning of the system.

\subsubsection{Numerical error assessment for harmonic approximations of $H_R$}\label{ErrorTest}\hfill

The numerical experiments above demonstrate the performance of
the harmonic approximations $P^{\mathrm{I}}$ and $P^{\mathrm{II}}$
on prescribed test functions. We now turn to the more delicate task of deriving quantitative
error bounds for these methods when applied to the unknown
harmonic component $H_R$. We emphasize that such an estimate
is essential for establishing a reliable global error bound
for the S--R method.

Our analysis is based on the observation in Section~\ref{HRxi} that
the boundary values of $H_R$ on the faces of the cube can be
evaluated to arbitrary accuracy by means of standard quadrature
schemes. For the approximation error estimation, we apply the following procedure:
\begin{itemize}
\item Augment the interpolation
set $\{x_i\}_{i=1}^{N}$ with a collection of reference points
$\{q_j\}_{j=1}^{J}$, uniformly distributed over all six faces of the
cube. Figure~\ref{Figure4}(A) illustrates a representative
configuration with $N=150$, corresponding to 25 interpolation
points on each face. The interpolation points $\{x_i\}$ are shown in
red, while the reference points $\{q_j\}$ are displayed in green.
\item
Compute a high-accuracy approximation of $\tilde H_R\sim H_R$ at both
sets of points, namely $\{x_i\}$ and $\{q_j\}$.
\item
Using the values $\{\tilde H_R(x_i)\}_{i=1}^N$
, compute the harmonic approximation  $P_N\sim H_R$ in the cube.
\item Estimate the error bound for the approximation $\tilde H_R$ over the cube as 

\begin{equation}\label{Bound}
E^N_R= 2E^N_{max}
\end{equation}
where
\begin{equation}\label{Emax}
E^N_{max}= max_{j=1}^J|\tilde H_R(q_j)-P_N(q_j)|.
\end{equation}

\end{itemize}

The error estimate \eqref{Bound} relies on the following observation:

\begin{observation}
Let the interpolation points $\{x_i\}$ be distributed on a uniform
mesh of size $h$, and let the reference points $\{q_j\}$ lie on a
uniform mesh of size $h/2$, so that each $q_j$ is located at the
midpoint between two adjacent interpolation points, as illustrated
in Figure~\ref{Figure4}(A).
Then,
\begin{equation}\label{Max}
\max_{x\in [0,1]^3} \bigl|\tilde H_R(x)-P_N(x)\bigr|
= E_{max}(1 + O(h^2)).
\end{equation}
The estimate is first established for the maximum error on the faces
of the cube. The global bound \eqref{Max} then follows from the
maximum principle for harmonic functions.

\end{observation}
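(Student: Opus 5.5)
The estimate \eqref{Max} comes from two ingredients: the error function vanishes at the interpolation nodes, and its maximum on each face is therefore attained near a reference point. Set $e(x)=\tilde H_R(x)-P_N(x)$. Here $\tilde H_R$ is understood as the harmonic function $H_R$ up to the quadrature error $\varepsilon$, which we treat as negligible. By the Proposition, $H_R$ is harmonic in $(-1,2)^3$, and $P_N$ (either $P_N^{\mathrm I}$ or $P_N^{\mathrm{II}}$) is harmonic in a neighbourhood of $[0,1]^3$. Hence $e$ is harmonic and $C^\infty$ on a neighbourhood of the closed cube, with all derivatives bounded there. By the maximum principle, $\max_{[0,1]^3}|e|=\max_{\partial\Omega}|e|$. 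It therefore suffices to prove
\[
\max_{F}|e| = E_{max}\,(1+O(h^2))
\]
on each face $F$. This reduction is the last step of the observation, and it is immediate.

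On a face $F$, use local coordinates $(s,t)$. The function $e$ vanishes (up to $\varepsilon$) at the interpolation nodes, which form a grid of size $h$. The reference points $q_j$ are the midpoints of the grid cells and edges. I would argue cell by cell. On a cell $[s_0,s_0+h]\times[t_0,t_0+h]$ whose corners are nodes, expand $e$ as its bilinear interpolant, which is zero, plus a remainder. The standard tensor-product remainder gives
\[
e(s,t)=\tfrac12 e_{ss}(\xi)(s-s_0)(s-s_0-h)+\tfrac12 e_{tt}(\eta)(t-t_0)(t-t_0-h)+\dots
\]
On each cell, therefore, $e$ is well approximated by a quadratic $Q$ vanishing at the four corners. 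The extremum of such a quadratic sits at the cell centre or edge midpoints, which are exactly the points $q_j$. Replacing the frozen second derivatives by their actual values introduces a relative error $O(h)$, since $e_{ss}$ varies by $O(h\,\|D^3e\|)$ across the cell. Symmetry about the cell centre should make the first-order term cancel, improving this to $O(h^2)$ relative to the peak. The conclusion would be
\[
\max_{\text{cell}}|e|=\max_{q_j\in\text{cell}}|e(q_j)|\,(1+O(h^2)).
\]
Taking the maximum over cells and faces then gives \eqref{Max}. The factor $2$ in \eqref{Bound} is a safety margin that absorbs the $O(h^2)$ term and the data error $\varepsilon$.

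\textbf{Main obstacle.} The hard step is justifying that the relative correction is $O(h^2)$ and not merely $O(h)$. The ratio $\|D^3e\|h/\|D^2e\|$ need not be small unless the derivatives of $e$ are comparable in scale. I would first try to exploit harmonicity, via interior estimates on the larger domain $(-1,2)^3$, to bound $\|D^k e\|$ in terms of $\|e\|$. Near edges and corners, where nodes sit at distance $h/2$ from the face boundary, boundary cells need separate treatment. If that fails, I would present the statement as an asymptotic heuristic. It would then hold under the assumption that the derivatives of $e$ vary slowly on the scale $h$, which is consistent with how the paper labels it an observation rather than a theorem.
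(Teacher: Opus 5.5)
Your reduction to the faces via the maximum principle is exactly the paper's argument. The paper gives no justification of the face estimate beyond asserting it, so the substance lies in your cell-by-cell step, and that step fails at the outset, not only at the $O(h)$ versus $O(h^2)$ refinement.

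\textbf{The quadratic model cannot hold.} The frozen-coefficient quadratic model needs $h\|D^3e\|\ll\|D^2e\|$ on a cell. Every $C^3$ function vanishing on the node grid satisfies the reverse inequality:
\begin{itemize}
\item Along the node row $t=t_0$, $e(\cdot,t_0)$ vanishes at three consecutive nodes. Rolle's theorem then gives $e_{ss}(\eta,t_0)=0$ for some $\eta$ within $2h$ of the cell, and likewise for $e_{tt}$.
\item The mixed second difference over the four corners vanishes, so $e_{st}$ has a zero in the cell.
\end{itemize}
Hence $\|D^2e\|_{L^\infty(\mathrm{cell})}\le 3h\,\|D^3e\|_{L^\infty(\mathrm{patch})}$. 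The second derivatives change across a cell by as much as their own size, and typically change sign. The quadratic term $O(h^2\|D^2e\|)$ is therefore no larger than the remainder scale $O(h^3\|D^3e\|)$, and your ``$O(h)$'' and ``$O(h^2)$'' relative corrections are really $O(1)$.

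This is a purely kinematic consequence of $e$ vanishing at the nodes. Harmonicity and interior estimates on $(-1,2)^3$ therefore cannot remove it; those estimates would in any case bound $D^ke$ by $\sup|e|$ on the larger box, not by $E_{max}$. For the same reason, your fallback hypothesis that the derivatives of $e$ vary slowly on the scale $h$ is incompatible with $e(x_i)=0$.

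\textbf{What is actually missing.} The Observation needs an assumption on the shape of $e$ between nodes, and harmonicity cannot supply it. For $h=1/n$, the harmonic function $\sin(2\pi x/h)\sin(2\pi y/h)\cosh(2\sqrt{2}\,\pi z/h)$ vanishes at every $x_i$ and every $q_j$ on all six faces, yet equals $1$ at $(h/4,h/4,0)$.

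A workable heuristic hypothesis is a two-scale form $e(s,t)\approx A(s,t)\,\psi(s/h,t/h)$. Here $\psi$ is a fixed periodic cell profile vanishing at the nodes and extremal at the half-grid points, and the envelope satisfies $|\nabla\log A|=O(1)$. Under this hypothesis your symmetry argument is correct: the maximizer in a cell moves $O(h^2)$ from the nearest $q_j$, and the peak value changes by a relative $O(h^2)$.

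The strips of width $h/2$ between the outermost nodes and the cube edges contain no four-node cells. They are often where interpolation errors peak, so they need their own assumption rather than being set aside.

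With such hypotheses stated explicitly, the result is a heuristic, consistent with its label as an observation. It cannot be derived from smoothness or harmonicity of $e$ plus Taylor expansion.
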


\subsubsection{A Chebyshev spectral alternative for approximating $H_R$.}\label{Cheby}\hfill

As an alternative to harmonic interpolation by polynomials or source
functions, the regular component $H_R$ may be approximated by a
tensor–product Chebyshev spectral collocation method on the cube
$\Omega=[0,1]^3$.

Let
$
x_i=\tfrac12\!\left(1+\cos(i\pi/N)\right),
\ i=0,\dots,N,
$
be the Chebyshev–Gauss–Lobatto nodes mapped to $[0,1]$, and define the
three–dimensional grid by the tensor product $(x_i,y_j,z_k)$.
The Laplace equation is enforced at interior collocation points using the
standard Chebyshev differentiation matrices in each coordinate
direction, while the Dirichlet boundary conditions are imposed at the
boundary nodes on the six faces of the cube; see, e.g.,
Canuto–Hussaini–Quarteroni–Zang~\cite{Canuto}.

If $H_R$ extends harmonically to a neighborhood strictly larger than
$\Omega$, as is the case for the cube, where $H_R$ is harmonic in
$(-1,2)^3$, the Chebyshev approximation converges spectrally (i.e.,
exponentially) with respect to $N$. In order to apply this approach,
the boundary values of $H_R$ must be supplied at the $(N+1)^2$
Chebyshev–Gauss–Lobatto nodes on each face; as explained in
Section~3.4.1, these values can be computed to any prescribed accuracy
using standard quadrature rules.

We note that a detailed numerical comparison between the spectral
collocation approach and the source–based approximation lies beyond
the scope of the present work.

\subsubsection{Computing $H_S$ inside $\Omega$}\label{CompHS}\hfill

Using \eqref{dGdn} together with the definition of $H_S$ in \eqref{H_S},
the evaluation of $H_S$ reduces to the approximation of integrals of the form
\begin{equation}
\iint_{[0,1]^2}
\frac{g(x,y)}{\big((x-a)^2+(y-b)^2+d^2\big)^{3/2}}
\, dx\, dy,
\qquad (a,b)\in [0,1]^2.
\label{Intabd}
\end{equation}
The main difficulty arises when $d$ is very small. Although the integral
remains finite for $d>0$, the kernel becomes sharply peaked near $(a,b)$,
and its derivatives grow rapidly in a neighbourhood of this point.
Consequently, standard tensor–product quadrature rules converge slowly
and may fail to deliver sufficient accuracy.

An efficient approach for treating nearly singular integrals 
is the self-adaptive coordinate transformation proposed by 
Telles~\cite{Telles}. 
This method applies a nonlinear change of variables 
that redistributes the quadrature nodes to cluster them 
near the near-singular location, thereby significantly enhancing accuracy. 
As a consequence, standard Gauss quadrature can still be employed, 
while the effective resolution is automatically increased 
in the region where the integrand exhibits rapid variation. 
This makes the method both efficient and robust for small values of $d$.

An alternative approach, in the same spirit but simpler to implement, 
is to apply standard quadrature rules on a locally refined mesh. 
For example, consider the integral in~\eqref{Intabd} with $(a,b)=(0.2,0.2)$ 
and small $d$. 
We subdivide $[0,1]^2$ into nine rectangles and apply a standard 
tensor-product quadrature rule on each subregion, 
using a finer mesh in the vicinity of the near-singular point. Figure \ref{Figure4}(B) exhibits the mesh refinement near $(0.2,0.2)$.

\begin{figure}[ht]
\centering

\begin{subfigure}{0.48\textwidth}
    \centering
\includegraphics[width=1.2\linewidth]{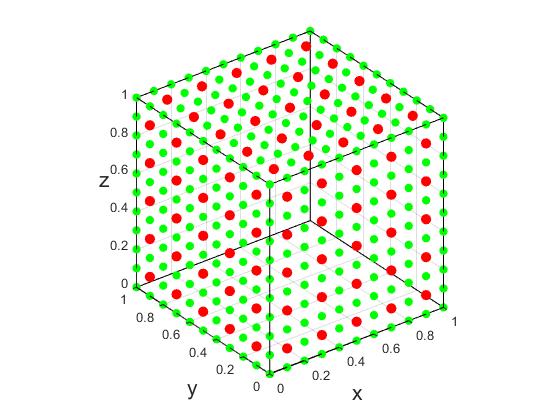}
    \caption{Face points}
    \label{fig:sub1}
\end{subfigure}
%\hfill
\begin{subfigure}{0.48\textwidth}
    \centering
    \includegraphics[width=1.1\linewidth]{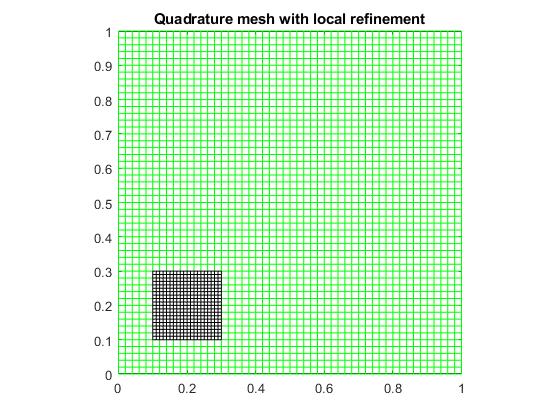}
    \caption{Two meshes}
    \label{fig:sub2}
\end{subfigure}

\caption{}
\label{Figure4}
\end{figure}

For the finite cylinder considered in Section~\ref{Cyl}, the singular
part of the Green's function is given by~\eqref{G3cyl}, where $G_0$ is defined by~\eqref{Gcyl} with $a=1$.
We emphasize that the infinite series in~\eqref{Gcyl} decays rapidly,
owing to the exponential factor in its terms, and therefore converges
very quickly in practical computations. The computation of the corresponding $H_S$ can be performed
using the same procedure described above for the unit cube.

\section{Numerical experiments}\label{sec:NumExp}

We consider the Dirichlet problem for the Laplace equation
in the unit cube $\Omega=(0,1)^3$,
\begin{equation}\label{eq:LaplaceCube}
\Delta u = 0 \quad \text{in }\Omega,
\qquad
u=g \quad \text{on }\partial\Omega,
\end{equation}
with piecewise constant boundary data, $g\equiv 1$ on the upper face and $g\equiv 0$ on all other faces.
This test problem is intentionally non-smooth at the edges
adjacent to the upper face.
Although the solution $u$ is continuous in $\overline{\Omega}$,
its derivatives exhibit singular behavior near the edges and corners,
which makes it a sensitive benchmark for accuracy and stability.

The test problem admits two standard physical interpretations.
First, it models the electrostatic potential inside a perfectly conducting
cubical cavity in which the upper face is maintained at a fixed potential
$V_0$ and the remaining five faces are grounded. In the absence of free
charge, the potential satisfies Laplace’s equation with Dirichlet boundary
data of the form considered here; see, e.g.,
\cite{MorseFeshbach}.
Equivalently, the problem represents the steady-state temperature
distribution in a homogeneous cube with no internal heat sources,
where the upper face is held at temperature $T_0$ and the other faces
are maintained at zero temperature. The steady temperature then solves
Laplace’s equation with the same Dirichlet boundary conditions;
see, for example, \cite{Evans}.

The S--R method was applied to the above test problem.
We computed the values $\tilde H_S(x_i)$ as described in Section \ref{HRxi}.
The regular component of the solution was approximated by a harmonic
representation in the form of a linear combination of source functions,
that is, $P_N = P_N^{\mathrm{II}}$ with $N=150$. The surface values of $P_{150}$ are displayed in Figure \ref{P_150}.

\begin{figure}
\centering
\includegraphics[width=1.1\linewidth]{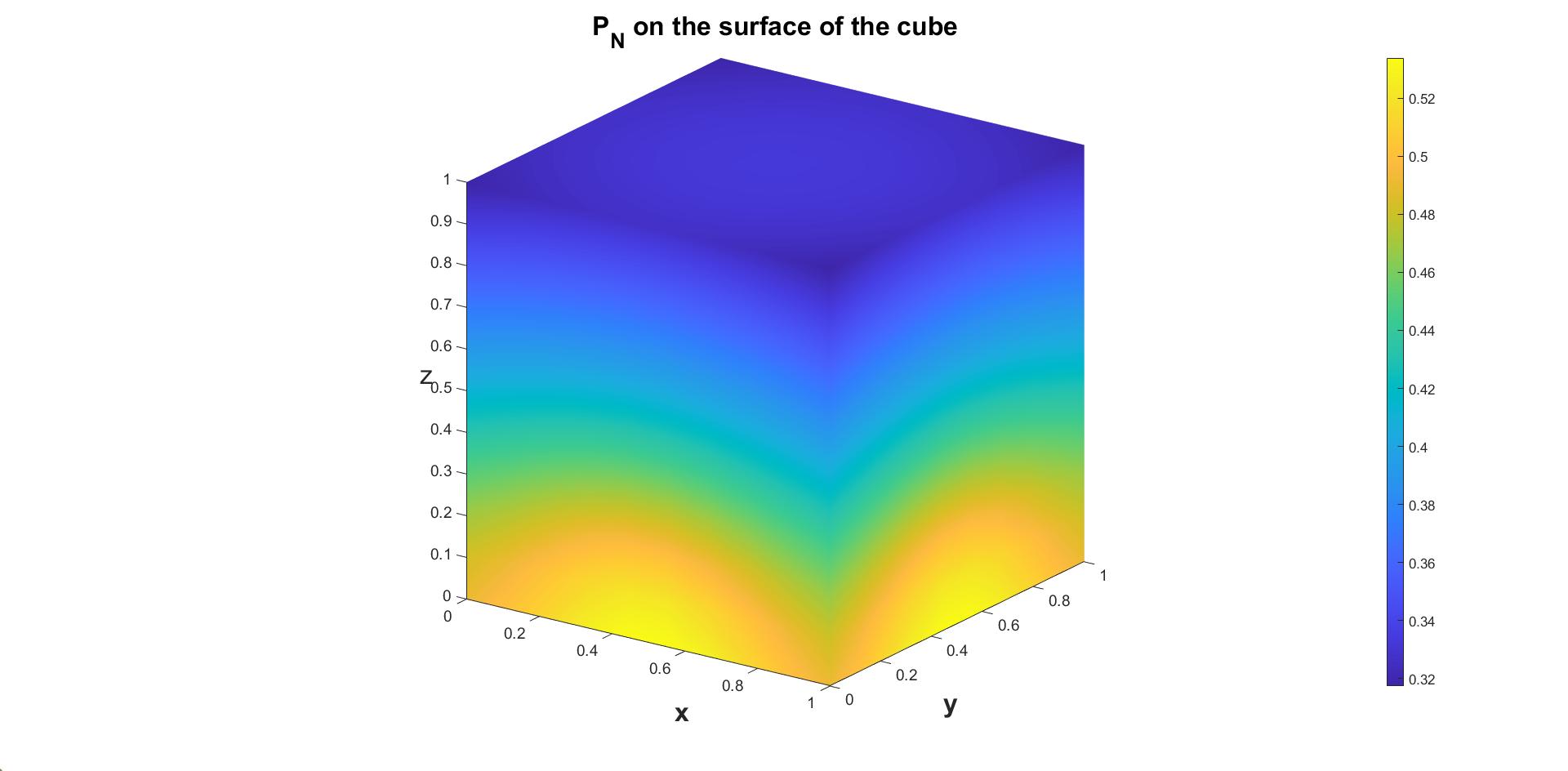}
\caption{The computed approximation $P_{150}$ of the regular component of the solution.}\label{P_150}
\end{figure}

With $P_N$ available, and after computing $\tilde H_S$, the approximation $u_N$ takes the form
\begin{equation}\label{uN2}
u_N(x) = \tilde H_S(x) + P_N(x), \qquad x \in \Omega.
\end{equation}

To visualize the singular behavior of the solution,
we plot in Figure~\ref{corner} the solution values along a planar
cross-section of the cube near the corner $(0,0,1)$,
where the singularity is most pronounced.
The triangular planar section lies at a distance of $0.0866$ from the corner,
and the solution values $u$ are visualized using a color-coded representation. The Figure clearly captures the fast singular transition from the value $1$ on the upper face
to the value $0$ on the adjacent faces.

\begin{figure}
\centering
\includegraphics[width=1.1\linewidth]{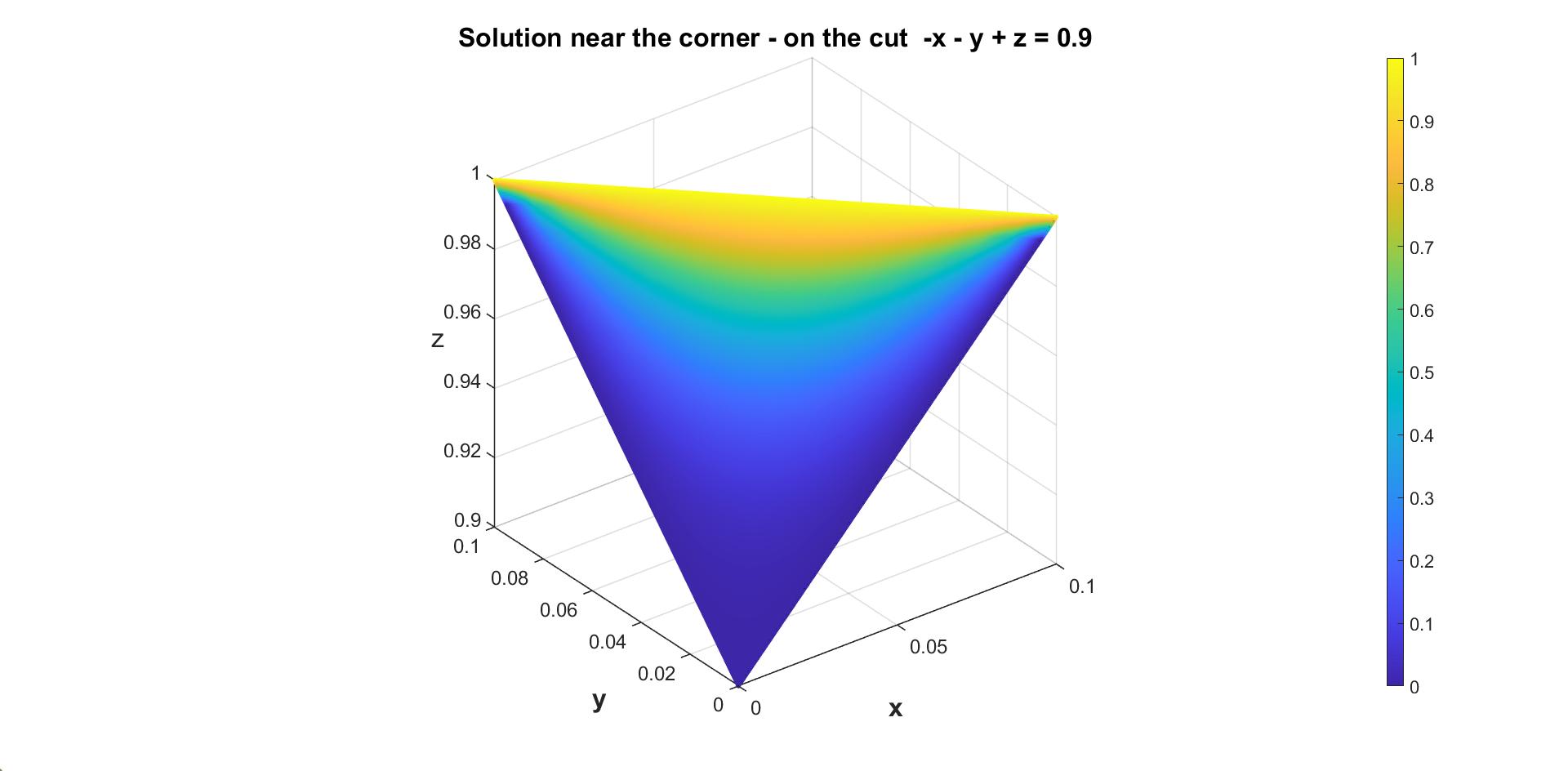}
\caption{The figure displays the computed approximation in the vicinity of the cube corner.
It clearly captures the singular transition from the value $1$ on the upper face
to the value $0$ on the adjacent faces.}\label{corner}
\end{figure}

\subsection{The approximation error}\hfill

The accuracy of $u_N$ is affected by three separate sources of error. The first source of error stems from the computation of 
$\tilde H_S(x_i)$ for $x_i \in \partial\Omega$. 
As discussed in Section~\ref{HRxi}, this step reduces to numerically integrating a regular function. 
Therefore, $H_S(x_i)$, and consequently $H_R(x_i)$, can be evaluated to any desired accuracy using standard quadrature schemes. The second source of error is associated with the reconstruction 
of the regular harmonic function from its boundary values. 
This is discussed in Section~\ref{PN}. 
It is shown there that approximation by a linear combination 
of source functions ($P_N^{\mathrm{II}}$, see \eqref{Phiinterpolation})
is stable and provides high accuracy for harmonic functions 
whose region of regularity is comparable to that of $H_R$. As shown in Section~\ref{Bound}, a reliable estimate $E_R$
for the approximation error of $H_R$ throughout the cube can be derived. For the test problem presented above, we obtained the error bound $E^{150}_R=0.22\times 10^{-5}$ for the approximation of $H_R$. Increasing the number of interpolation points to 

The third source of error arises from the approximation 
of the surface integrals involved in the computation of $H_S(x)$. 
If $x$ is sufficiently far from $\partial\Omega$, 
standard tensor-product quadrature rules yield high-order accuracy. 
When $x$ is close to $\partial\Omega$, 
we employ tensor-product Simpson quadrature on a rectangular 
subdivision of the faces, with additional refinement 
near the near-singular point, as described in Section~\ref{CompHS}. 

This strategy was employed to generate the results near the corner 
of the cube shown in Figure~\ref{corner}, 
using a mesh refinement adapted to the distance of $x$ 
from the boundary. 
The computed values were further validated by additional 
grid refinement. 
Overall, a maximum error of $0.5\times 10^{-5}$ 
was achieved for the approximation of $H_S$ over a uniform distribution of points 
in the triangular region depicted in Figure~\ref{corner}.

Taking into account the error bound obtained for the approximation of $H_R$, and combining it with \eqref{eq:SR} and \eqref{uN2}, we obtain that over the corner triangle shown in Figure \ref{corner} the approximation error satisfies
\begin{equation}
    |u(x)-u_{150}(x)| \le 0.5\times 10^{-5} + 2.2\times 10^{-5}
    = 2.7\times 10^{-5}.
\end{equation}

\end{document}